\documentclass[11pt]{amsart}

\usepackage{amsmath,amssymb,amsthm,mathtools}
\mathtoolsset{showonlyrefs}
\usepackage[margin=1.15in]{geometry}
\usepackage[colorlinks=true,citecolor=blue,linkcolor=blue,urlcolor=blue]{hyperref}

\numberwithin{equation}{section}

\newtheorem{theorem}{Theorem}[section]
\newtheorem{proposition}[theorem]{Proposition}
\newtheorem{lemma}[theorem]{Lemma}

\theoremstyle{definition}

\theoremstyle{plain}
\newtheorem{remark}[theorem]{Remark}
\newcommand{\R}{\mathbb R}

\DeclareMathOperator{\Lip}{Lip}
\DeclareMathOperator{\osc}{osc}
\DeclareMathOperator{\tr}{tr}
\DeclareMathOperator{\cof}{cof}

\title[Sharp Small-Height Lipschitz Estimates]
{Sharp Small-Height Estimates for Subcritical Special Lagrangian Equations in Dimension Three}
\keywords{special Lagrangian equation, gradient estimate, two-point function,
viscosity solution}

\begin{document}

\begin{abstract}
We prove an interior Lipschitz estimate with a sharp height threshold for continuous viscosity solutions of the three-dimensional special Lagrangian equation with subcritical phase $|\Theta|<\pi/2$. If
\[
\osc_{B_R}u\leq MR^2,\qquad 0\leq M<\sec\Theta,
\]
then $\Lip_{B_{\rho R}}u\leq CR$ for some $\rho>0$ and $C>0$ depending only on $|\Theta|$ and $M$. The threshold is optimal: at $M=\sec\Theta$, there are entire real-analytic solutions whose Lipschitz seminorms are unbounded on every fixed interior ball. The proof uses a two-point maximum principle and algebraic identities specific to dimension three.
\end{abstract}

\author{Caiyan Li}
\address[C.L]{School of Mathematical Sciences, Xiamen University, 361005, Xiamen, P.R. China}
\email{caiyanli@xmu.edu.cn}
\thanks{C.L. is supported by NSFC (Grant No.~12501274).}
 
\author{Wei Wei}
\address[W.W]{School of Mathematics, Nanjing University, 210093, Nanjing, P.R. China}
\email{wei\_wei@nju.edu.cn} 
 \thanks{W.W. is supported by NSFC (Grant No.~12571218)}
\maketitle

\section{Introduction}

We consider the special Lagrangian equation
\begin{equation}
	\label{eq:main} \sum_{i=1}^{n}\arctan\lambda_i(D^2u)=\Theta
	\quad\text{in } \mathbb R^n,
\end{equation}
where $\lambda_i(D^2u)$ are the eigenvalues of $D^2u$. Equation \eqref{eq:main} originates in the calibrated geometry of Harvey-Lawson \cite{HarveyLawson}. We identify $\mathbb R^n\times\mathbb R^n$ with $\mathbb C^n$ via $z_j=x_j+\mathrm{i}y_j$ and equip it with the standard symplectic form $\omega=\sum_{j=1}^n dx_j\wedge dy_j$ and the holomorphic volume form $\Omega=dz_1\wedge\cdots\wedge dz_n$. For a smooth function $u$, the symmetry of $D^2u$ implies that the gradient graph $\Gamma_u=\{(x,Du(x))\}$ is Lagrangian, and \eqref{eq:main} is exactly the condition that $\Gamma_u$ be special Lagrangian with phase $\Theta$. Hence $\Gamma_u$ is calibrated, and therefore minimal and locally volume-minimizing.

The phase is called \textit{critical} if $|\Theta|=(n-2)\pi/2$, \textit{supercritical} if $|\Theta|>(n-2)\pi/2$, and \textit{subcritical} if $|\Theta|<(n-2)\pi/2$. Yuan \cite{YuanGlobal} showed that the corresponding level set in the space of symmetric $n\times n$ matrices is convex precisely in the critical and supercritical ranges. This convexity fails in the subcritical range, making it fundamentally different from the critical and supercritical cases.

 In the critical and
supercritical ranges, the scale-invariant estimate
\begin{equation}
    \label{eq:known-gradient-intro}
    \max_{B_R}|Du|
    \leq C(n)\frac{\osc_{B_{2R}}u}{R}
\end{equation}
holds \cite{WarrenYuan,YuanLectures}.  Our focus is the fundamentally
different case $n=3$ and $|\Theta|<\pi/2$.

It is natural to ask whether an estimate of the form
\eqref{eq:known-gradient-intro} continues to hold in this subcritical
range.  Fan, the first author, and Wang \cite{FanLiWang} recently answered
this question negatively: for every fixed $|\Theta|<\pi/2$, they
constructed entire real-analytic solutions with uniformly bounded
oscillation on $B_1$ but unbounded Lipschitz seminorm on every smaller
ball.  A particularly informative family is
\begin{equation}
    \label{eq:intro-counterexample}
    U_{k,\Theta}(x,y,z)
    =\frac{\tan\Theta}{2}(x^2+y^2)
    +\frac{\sec\Theta}{2}
    \left[\cos(2kz)(x^2-y^2)-2\sin(2kz)xy\right].
\end{equation}
For every $k$,
\begin{equation}
    \label{counterexample for lips}
    \osc_{B_1}U_{k,\Theta}=\sec\Theta,
\end{equation}
whereas, for every fixed $r\in(0,1)$,
\[
    \Lip_{B_r}U_{k,\Theta}
    \geq \left|\partial_zU_{k,\Theta}(r/2,r/2,0)\right|
    =\frac{kr^2}{2}\sec\Theta
    \longrightarrow\infty.
\]

The family~\eqref{eq:intro-counterexample} contains more information than
the mere failure of an unconditional gradient estimate. 
The counterexamples lie exactly at the level $\osc_{B_1} u=\sec\Theta$.  Thus they do not rule out
gradient control below that level. Instead, the counterexamples suggest the following
sharper question:

\medskip
\begin{quote}
\emph{Does there exist a sharp constant $M_{\mathrm{crit}}(\Theta)$ such that  uniform interior Lipschitz estimates hold precisely if  $\osc_{B_1} u<M_{\mathrm{crit}}(\Theta)$?}
\end{quote}
\medskip

In this formulation the examples above give the upper bound
$M_{\mathrm{crit}}(\Theta)\leq\sec\Theta$.
We address  this problem in this paper. 

There is also evidence that small height is the correct hypothesis.  For
smooth solutions, one can try a one-point maximum-principle argument, which is applied to an
auxiliary function of the form
\begin{equation}
    \label{auxiliary function1}
    G(x)=
    \bigl(1-|x|^2+\beta(u(x)-\sup_{B_1}u)\bigr)^2
    \log\sqrt{1+|Du|^2}
\end{equation}
gives an interior gradient estimate when
$M<\frac12\tan|\Theta|$; at $\Theta=0$ this argument gives no positive
range.  The one-point calculation therefore identifies the right type of
assumption, but it does not reach the height $\sec\Theta$ ruled out by
the counterexamples.  Closing this gap is the problem addressed here.

For $E\subset\mathbb R^n$, set
\[
    \Lip_Eu
    :=\sup_{x,y\in E,\ x\neq y}
    \frac{|u(x)-u(y)|}{|x-y|}.
\]
Our main result determines the sharp height.

\begin{theorem}\label{thm:small-height}
Let $B_R\subset\mathbb R^3$, and let $u\in C(B_R)$ be a viscosity
solution of~\eqref{eq:main} with $|\Theta|<\pi/2$.  Suppose that
\[
    \osc_{B_R}u\leq MR^2,
    \qquad 0\leq M<\sec\Theta.
\]
Then there exist constants $C>0$ and $0<\rho<1$, depending only on
$|\Theta|$ and $M$, such that
\[
    \Lip_{B_{\rho R}}u\leq CR.
\]
\end{theorem}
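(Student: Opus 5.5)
Scaling first: replacing $u$ by $R^{-2}u(Rx)$ preserves \eqref{eq:main}, so we take $R=1$, $\osc_{B_1}u\le M<\sec\Theta$, and aim for $\Lip_{B_\rho}u\le C$. We also reduce to smooth solutions. The plan is to approximate the viscosity solution on slightly smaller balls by smooth solutions of \eqref{eq:main}, or of a regularized phase equation, with the same boundary data. We then prove an a priori Lipschitz bound that depends only on $|\Theta|$ and $M+\varepsilon$, and pass to the limit using stability of viscosity solutions. Because the estimate is strict in $M$, it has room to absorb the small loss in height from the approximation. Replacing $u$ by $-u(-x)$ if needed, we may assume $\Theta\ge0$.

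\textbf{Core step: a two-point maximum principle.} For smooth $u$ we study
\[
Z(x,y)=u(x)-u(y)-L\,\varphi(|x-y|)-\psi(x)-\psi(y),
\]
where $\varphi$ is concave with $\varphi(0)=0$ and $\varphi'(0)=1$, and $\psi$ is a cutoff barrier that blows up (or is at least $M$) near $\partial B_1$. We want to show that $Z\le0$ for $L$ large. Suppose instead that $\max Z>0$, attained at $(x_0,y_0)$ with $x_0\ne y_0$. The first-order conditions give
\[
Du(x_0)=L\varphi' e+D\psi(x_0),\qquad Du(y_0)=L\varphi' e-D\psi(y_0),\qquad e=\frac{x_0-y_0}{|x_0-y_0|},
\]
so both gradients are large and nearly parallel to $e$. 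The second-order condition says the block matrix of Hessians is dominated by the Hessian of $L\varphi(|x-y|)+\psi(x)+\psi(y)$.

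We then test this against the linearized operator $F^{ij}=\bigl((I+(D^2u)^2)^{-1}\bigr)^{ij}$, evaluated at $x_0$ and at $y_0$. The natural test is with "coupled" matrices, as in Brendle's or Andrews--Clutterbuck's two-point method, which produces a term $L\varphi''$ in the $e$-direction. The key quantity is the $F$-weight in the $e$ direction, $F^{ee}=\langle (I+(D^2u)^2)^{-1}e,e\rangle$. For this we need a dimension-three algebraic identity. When $\sum\arctan\lambda_i=\Theta$ with $n=3$, we have $\sigma_1-\sigma_3=\tan\Theta\,(1-\sigma_2)$. Combined with the large-gradient information, this should show that the Hessian cannot be large in all directions orthogonal to the gradient unless the height $u(x_0)-u(y_0)$ is at least $\sec\Theta$ times the squared distance to the boundary. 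I would model this computation on the counterexample $U_{k,\Theta}$, which saturates it: there the Hessian restricted to the $(x,y)$-plane has eigenvalues $\tan\Theta/1\pm\sec\Theta$, making $\sec\Theta$ the relevant quadratic growth rate.

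\textbf{Main obstacle.} The decisive step is showing that at the maximum point the inequality $\sum F^{ij}(\cdots)\le 0$ is contradicted exactly when $M<\sec\Theta$. This amounts to a sharp lower bound on the weight $F^{ee}$ along the gradient direction, in terms of $\tan\Theta$ and the gap $\sec\Theta-M$. My first attempt would be to diagonalize $D^2u(x_0)$, parametrize the admissible eigenvalue triples on the surface $\sigma_1-\sigma_3=\tan\Theta(1-\sigma_2)$, and optimize the ratio of the barrier contribution $\sum F^{ii}\psi_{ii}$ to the gradient-term contribution. I would choose $\psi(x)=\frac{M'}{1}(|x|^2)$-type quadratic barriers with $M<M'<\sec\Theta$, so that the comparison constant comes out to exactly $\sec\Theta$. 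If the subcritical, non-convex geometry makes a pure one-point bound fail there (the abstract's $\tfrac12\tan\Theta$ loss), the two-point coupling at $x_0$ and $y_0$ is what must recover the missing factor. I expect that is where the dimension-three identities are essential.

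Once $Z\le0$ holds, we get $|u(x)-u(y)|\le L|x-y|$ for $x,y\in B_\rho$, where $\psi$ is small. This gives $\Lip_{B_\rho}u\le C(|\Theta|,M)$, and scaling back yields $CR$.
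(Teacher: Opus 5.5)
Your proposal has a genuine gap exactly where you place the ``main obstacle'': no interior contradiction is actually produced, and the mechanism you sketch cannot deliver it as stated.

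At a maximum, the tests in the direction $e$ force $u_{ee}$ to be of size $L|\varphi''|$ with \emph{opposite} signs at $x_0$ and $y_0$, so the two Hessians are far apart. Since $\sum\arctan\lambda_i$ is not concave in the subcritical range, linearizing with $F^{ij}$ at either point, or bounding $F^{ee}$ from below, gives no control of $F(D^2u(x_0))-F(D^2u(y_0))$. The gradient information from the first-order conditions plays no role for an equation without gradient dependence. Nor is the scalar identity $\sigma_1-\sigma_3=\tan\Theta\,(1-\sigma_2)$ enough: it holds equally on the branches of phase $\Theta\pm\pi$.

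The missing idea is to replace linearization by exact algebra on transverse $2\times2$ blocks.
With $w=\cos\Theta\,u-\tfrac{\sin\Theta}{2}|x|^2$ and $W=D^2w$, one has $\sigma_3(W)-\sigma_1(W)=2\sin\Theta$ and, crucially, $I-\cof W>0$; the latter is what selects the right branch. Take $e_3=e$. The $e_3$-tests give $w_{33}\gtrsim L^2$ at one point and $w_{33}\lesssim -L^2$ at the other.

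Now use the variations $\xi=(v,-\vec p(x)\cdot v)$, $\eta=(v,-\vec p(y)\cdot v)$, where $\vec p=(w_{13},w_{23})/w_{33}$ and the transverse part $v$ is \emph{common} to both points. This kills the transverse contribution of the $|x-y|$-term and yields $P(x)+P(y)\le(2\beta+o(1))(I_2+\vec q\,\vec q^{\mathsf T})$. Here $P=|w_{33}|^{-1}I_2-\frac{w_{33}}{|w_{33}|}[w_{ij}]_{i,j=1}^2+|w_{33}|\,\vec p\,\vec p^{\mathsf T}$ and $\vec q=\tfrac12(\vec p(x)+\vec p(y))$.

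The upper-left block of $I-\cof W$ is $|w_{33}|\cof P$, so $P>0$ and $\det P=1+|\vec p|^2+2\sin\Theta/w_{33}+w_{33}^{-2}$. Minkowski's inequality $\sqrt{\det(P(x)+P(y))}\ge\sqrt{\det P(x)}+\sqrt{\det P(y)}$, together with convexity of $\sqrt{1+|\vec p|^2}$, then forces $\beta\ge1-o(1)$ as $L\to\infty$.

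The threshold $\sec\Theta$ is exactly the compatibility of this interior requirement $\beta<1$ with the boundary requirements. The barrier is $\beta\sec\Theta\,|\tfrac{x+y}{2}-z_0|^2$, and the boundary requires $M\cos\Theta<a<\beta\rho^2$, where $a\sec\Theta$ is the height of the profile $\sec\Theta\bigl(Lr-L^2r^2/(4a)\bigr)$ on $r\le 2a/L$. Your choice $M<M'<\sec\Theta$ has the right flavor, but nothing in the proposal yet supports it.

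Two further steps fail as written.

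First, the reduction to smooth solutions is not available. In the subcritical range there are singular viscosity solutions \cite{NadirashviliVladut,WangYuanSingular}, so smooth solutions with the given boundary data need not exist. A ``regularized phase equation'' would destroy precisely the exact identities above. Instead, run the argument on the viscosity solution itself with the theorem of sums \cite{CrandallIshiiLions}. Use the strict monotonicity of $t\mapsto\sum_i\arctan\lambda_i(A+tI)$ to replace the jets by $A_x+t_xI$ and $A_y-t_yI$, with $t_x,t_y\ge0$, lying exactly on $\{\sum_i\arctan\lambda_i=\Theta\}$. This only strengthens the coupled inequality, so the algebra applies.

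Second, $Z\le0$ with a barrier $\psi(x)+\psi(y)$ centered at a fixed point gives only $|u(x)-u(y)|\le L\varphi(|x-y|)+\psi(x)+\psi(y)$, which is not a Lipschitz bound. You must recenter the barrier at $z_0=x$ (by translation invariance, with $B_\rho(x)\Subset B_1$) so that it contributes only $O(|x-y|^2)$. Pairs with $|x-y|$ bounded below are then handled by the oscillation bound.
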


Consequently,
\[
    M_{\mathrm{crit}}(\Theta)=\sec\Theta.
\]

The condition is sharp: the entire real-analytic family
\eqref{eq:intro-counterexample} has height $M=\sec\Theta$ but unbounded
interior Lipschitz seminorms, ruling out uniform estimates for
$M\geq\sec\Theta$. Thus the small-height assumption is intrinsic, with
its optimal threshold dictated by the counterexamples. The
theorem applies to continuous viscosity solutions without convexity,
semiconvexity, or an a priori $C^{1,1}$ bound.

Let us briefly place the result in the existing regularity theory for special Lagrangian equations.

Warren-Yuan \cite{WarrenYuan} also obtained a Hessian estimate in dimension three, and
Wang-Yuan \cite{WangYuanHessian} subsequently extended it to dimensions $n\ge3$. Together, these gradient and Hessian estimates yield interior analyticity for continuous viscosity solutions. Yuan \cite{YuanBernstein,YuanGlobal} further showed that entire convex solutions  and smooth entire solutions with supercritical phase are quadratic.
Related Hessian estimates were obtained by the first author through compactness \cite{LiCompactness}, by Ding under an additional structural condition  \cite{DingHessian}, and by Shankar through a pointwise
doubling argument \cite{ShankarDoubling}.

Under a convexity assumption, 
Chen-Warren-Yuan \cite{ChenWarrenYuan} obtained interior Hessian estimates for smooth convex solutions, and Chen-Shankar-Yuan \cite{ChenShankarYuan} proved that convex viscosity solutions are real analytic. Without assuming convexity, Yuan \cite{YuanAPriori}  obtained an interior $C^{2,\alpha}$ estimate under an a priori $C^{1,1}$ bound in dimension three. For small $L^\infty$ perturbations of quadratic solutions, Savin \cite[Theorem~1.3]{SavinSmall} proved interior $C^{2,\alpha}$ regularity.

Without these assumptions, the behavior in the subcritical case is markedly different. Nadirashvili-Vl\u{a}du\c{t}  \cite{NadirashviliVladut} constructed a singular $C^{1,1/3}$ viscosity solution in dimension three. For every subcritical phase, Wang-Yuan \cite{WangYuanSingular} constructed singular $C^{1,1/(2m-1)}$ viscosity solutions for each integer $m\ge2$ and showed that interior Hessian estimates in terms of the gradient alone fail. Later, Mooney-Savin  \cite{MooneySavin} constructed a semiconvex Lipschitz solution that is not $C^1$, whose gradient graph is not minimal. Also, Mooney \cite{MooneyHomogeneous} proved that a $C^2$ function on $\mathbb R^n\setminus\{0\}$, homogeneous of degree $\alpha\in(0,1)$ and satisfying $\det D^2u\ne0$, cannot change sign, thereby ruling out counterexamples to interior gradient estimates obtained through this homogeneous construction. For $-(n-2)\pi/2<\Theta<\pi/2$, Mooney-Shankar
\cite{MooneyShankar} proved that viscosity solutions are analytic provided that $D^2u\ge -\tan\bigl((\pi/2-\Theta)/(n-1)\bigr)I$. This semiconvexity constant is sharp: any larger constant
admits Lipschitz viscosity solutions that are not $C^1$.  

The proof is guided by the way the sharp question is formulated.  Both
the hypothesis $\osc u\leq M$ and the desired Lipschitz conclusion compare
the values of $u$ at two points.  We therefore replace the one-point
quantity~\eqref{auxiliary function1} by a two-point function.  After
scaling to $B_1$, its essential form is
\begin{equation}
    \label{eq:Z-intro}
    Z(x,y)=u(y)-u(x)
    - \sec\Theta 
    \left[
        L|y-x|-\frac{L^2|y-x|^2}{4a}
        +\beta\left|\frac{x+y}{2}-z_0\right|^2
    \right].
\end{equation}
The parameters can be chosen so that
\[
    M\cos\Theta<a<\rho^2,
    \qquad \frac{a}{\rho^2}<\beta<1,
\]
precisely when $M\cos\Theta<1$.  The first inequality excludes a
positive maximum of $Z$ on the outer part of the two-point boundary,
while $\beta<1$ is what eventually rules out a positive interior maximum.
Thus the sharp threshold is already visible in the geometry of the
two-point comparison.

The interior argument uses algebra special to the three-dimensional
equation.  With the phase shift
\[
    W=\cos\Theta\,D^2u-\sin\Theta\,I,
\]
equation~\eqref{eq:main} yields
\[
    \sigma_3(W)-\sigma_1(W)=2\sin\Theta,
    \qquad I-\cof W>0.
\]
At a positive interior maximum of $Z$, the second-derivative inequality,
together with these identities, produces two positive-definite
$2\times2$ matrices.  The two-point inequality gives an upper bound for
the determinant of their sum, whereas the phase identities give a
strictly incompatible lower bound.  This contradiction proves the
estimate.  For continuous viscosity solutions, the classical
second-derivative inequality is replaced by the theorem of sums of
Crandall--Ishii--Lions \cite{CrandallIshiiLions}.

Height-controlled gradient estimates go back to
Bombieri--De Giorgi--Miranda \cite{BombieriDeGiorgiMiranda},
Bombieri--Giusti \cite{BombieriGiusti}, and Korevaar \cite{Korevaar}; see
also \cite{WangWeiZhang} for a recent one-point argument in a capillary
setting.  Two-point maximum principles have been effective in viscosity
theory and in several geometric and spectral problems; see, for example,
\cite{CrandallIshiiLions,AndrewsMultiPoint,Andrews-matrix,
AndrewsClutterbuckGap,BrendleLawson}.  The new feature here is that the
two-point method, combined with the phase-shift identities above, detects
the \emph{exact} height at which subcritical gradient control breaks down.

\medskip

\section{Proof of the main theorem}

Let $\operatorname{Sym}(n)$ denote the space of real symmetric $n\times n$ matrices. For a matrix $A\in\mathbb R^{n\times n}$, let $\cof A$ denote its cofactor matrix. If $A^{ij}$
is obtained from $A$ by deleting its $i$th row and $j$th column, then
\[
(\cof A)_{ij}=(-1)^{i+j}\det A^{ij}.
\]
For every $Q\in\operatorname{Sym}(2)$, the definition of the cofactor matrix gives
\begin{equation}\label{cof A linear in 2-2}
\cof Q=(\tr Q)I_2-Q,
\end{equation}
where $I_2$ is the identity matrix.

For the  readers' convenience, we state the basic algebraic facts used in the proof.

\begin{lemma}\label{lem:phase-identity}
Let $H\in\operatorname{Sym}(3)$  satisfy 
\[
\sum_{i=1}^{3}\arctan\lambda_i(H)=\Theta,
\qquad
|\Theta|<\frac{\pi}{2}.
\]
Set
\[
W:=\cos\Theta\,H-\sin\Theta\,I.
\]
Then
\begin{align}
\sigma_3(W) -\sigma_1(W)&=2\sin\Theta,
\label{eq:phase-det-trace}\\
I-\cof W&>0.
\label{eq:phase-cofactor}
\end{align}
\end{lemma}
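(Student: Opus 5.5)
We diagonalize $H$ and work with the angles $\theta_i:=\arctan\lambda_i(H)\in(-\pi/2,\pi/2)$, so that $\theta_1+\theta_2+\theta_3=\Theta$. Both $W$ and $\cof W$ are diagonal in the same orthonormal eigenbasis as $H$. The eigenvalues of $W$ are
\[
w_i=\cos\Theta\,\lambda_i-\sin\Theta=\frac{\sin(\theta_i-\Theta)}{\cos\theta_i}.
\]
The eigenvalues of $\cof W$ are the products $w_jw_k$ with $j\neq k$. Thus \eqref{eq:phase-det-trace} becomes a symmetric-function identity, and \eqref{eq:phase-cofactor} becomes the three scalar inequalities $w_jw_k<1$.

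For \eqref{eq:phase-det-trace}, we encode the phase condition algebraically. We have
\[
\prod_j(1+\mathrm i\lambda_j)=\prod_j \frac{e^{\mathrm i\theta_j}}{\cos\theta_j},
\]
and this has argument $\Theta$. Expanding the left side gives $1-\sigma_2+\mathrm i(\sigma_1-\sigma_3)$, where $\sigma_k=\sigma_k(H)$. Hence the imaginary part of its product with $e^{-\mathrm i\Theta}$ vanishes:
\[
\cos\Theta\,(\sigma_1-\sigma_3)=\sin\Theta\,(1-\sigma_2).
\]
Next we write $c=\cos\Theta$ and $s=\sin\Theta$, and expand the elementary symmetric functions of $W=cH-sI$:
\[
\sigma_1(W)=c\sigma_1-3s,\qquad \sigma_3(W)=c^3\sigma_3-c^2s\sigma_2+cs^2\sigma_1-s^3.
\]
Subtracting, and using $cs^2-c=-c^3$, we get
\[
\sigma_3(W)-\sigma_1(W)=c^2\bigl(c\sigma_3-c\sigma_1\bigr)-c^2s\sigma_2+3s-s^3.
\]
Substituting $c\sigma_3-c\sigma_1=-s(1-\sigma_2)$, the $\sigma_2$ terms cancel. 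What remains is $-c^2s+3s-s^3=s\bigl(3-(c^2+s^2)\bigr)=2s$, which is \eqref{eq:phase-det-trace}.

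For \eqref{eq:phase-cofactor}, which I expect to be the only step needing an idea, let $\{j,k,l\}=\{1,2,3\}$. Then $\theta_j-\Theta=-(\theta_k+\theta_l)$ and $\theta_k-\Theta=-(\theta_j+\theta_l)$, so
\[
w_jw_k=\frac{\sin(\theta_k+\theta_l)\sin(\theta_j+\theta_l)}{\cos\theta_j\cos\theta_k}.
\]
The denominator is positive, so it suffices to show $\sin(\theta_k+\theta_l)\sin(\theta_j+\theta_l)<\cos\theta_j\cos\theta_k$. Applying product-to-sum formulas to both sides, the terms $\tfrac12\cos(\theta_j-\theta_k)$ cancel. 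The inequality reduces to
\[
\cos(\theta_j+\theta_k)+\cos(\theta_j+\theta_k+2\theta_l)>0.
\]
By sum-to-product, the left side equals $2\cos(\theta_j+\theta_k+\theta_l)\cos\theta_l=2\cos\Theta\cos\theta_l$. This is strictly positive because $|\Theta|<\pi/2$ and $|\theta_l|<\pi/2$. Hence every eigenvalue of $\cof W$ is less than $1$, and $I-\cof W>0$. The subcriticality $|\Theta|<\pi/2$ enters exactly here, through the factor $\cos\Theta$.
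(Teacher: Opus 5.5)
Your proof is correct and follows essentially the same route as the paper. For the trace--determinant identity you use the complex product $\prod_j(1+\mathrm{i}\lambda_j)$, whose argument is $\Theta$; for the cofactor bound you diagonalize and arrive at the same closed form $1-w_jw_k=\cos\Theta\cos\theta_l/(\cos\theta_j\cos\theta_k)>0$. The only differences are cosmetic: you eliminate the modulus $\prod_j\sqrt{1+\lambda_j^2}$ before substituting, and you use product-to-sum formulas where the paper uses the tangent addition formula.
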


\begin{proof}
Denote $\lambda_i=\lambda_i(H)$ and $\theta_i=\arctan\lambda_i.$
Then $\theta_1+\theta_2+\theta_3=\Theta$. 
Since  
\[
\prod_{i=1}^{3}(1+\mathrm{i}\lambda_i)
=
e^{\mathrm{i}\Theta}
\prod_{i=1}^{3}\sqrt{1+\lambda_i^2},
\]
taking real and imaginary parts gives
\begin{align}
1-\sigma_2(H)
&=\cos\Theta\prod_{i=1}^{3}\sqrt{1+\lambda_i^2},
\label{eq:phase-real}\\
\sigma_1(H)-\sigma_3(H)
&=\sin\Theta\prod_{i=1}^{3}\sqrt{1+\lambda_i^2}.
\label{eq:phase-imag}
\end{align}

The eigenvalues of $W$ are $\mu_i:=\cos\Theta\,\lambda_i-\sin\Theta$. 
Hence, by \eqref{eq:phase-real} and \eqref{eq:phase-imag}, 
\begin{align*}
&\sigma_3(W) -\sigma_1(W) \\
=&\cos^3 \Theta\, \sigma_3(H)-\cos^2\Theta\sin\Theta \,\sigma_2(H)
+\cos\Theta\sin^2\Theta\,\sigma_1(H)-\sin^3\Theta -\cos\Theta\,\sigma_1(H)+3\sin\Theta\\
=& -\cos^3\Theta \left( \sigma_1(H)-\sigma_3(H) \right) -\cos^2\Theta\sin\Theta \sigma_2(H) +3\sin\Theta-\sin^3\Theta \\
= &-\cos^3\Theta\sin\Theta\prod_{i=1}^{3}\sqrt{1+\lambda_i^2} -\cos^2\Theta\sin\Theta \Big( 1-\cos\Theta \prod_{i=1}^{3}\sqrt{1+\lambda_i^2} \Big)  +3\sin\Theta-\sin^3\Theta\\
=& 2\sin\Theta.
\end{align*}
This proves \eqref{eq:phase-det-trace}.

It remains to prove \eqref{eq:phase-cofactor}.
The eigenvalues of $I-\cof W$ are $1-\mu_j\mu_k$, where $\{i,j,k\}=\{1,2,3\}$. 
Since
\[
\lambda_\ell=\tan\theta_\ell, \qquad \theta_j+\theta_k=\Theta-\theta_i,
\]
we have
\[
1-\lambda_j\lambda_k
=1-\tan\theta_j\tan\theta_k
=\frac{\cos\theta_j\cos\theta_k-\sin\theta_j\sin\theta_k}
{\cos\theta_j\cos\theta_k}
=\frac{\cos(\theta_j+\theta_k)}
{\cos\theta_j\cos\theta_k}
\]
Similarly,
\[
\lambda_j+\lambda_k=\frac{\sin(\theta_j+\theta_k)}{\cos\theta_j\cos\theta_k}.
\]
Therefore,
\begin{align*}
1-\mu_j\mu_k &= 1- (\cos\Theta\,\lambda_j-\sin\Theta) (\cos\Theta\,\lambda_k-\sin\Theta)\\
&= \cos^2\Theta(1-\lambda_j\lambda_k) +\cos\Theta\sin\Theta(\lambda_j+\lambda_k)\\
&= \cos\Theta\, \frac{ \cos\Theta\cos(\theta_j+\theta_k) +\sin\Theta\sin(\theta_j+\theta_k) }{ \cos\theta_j\cos\theta_k }\\
&= \frac{ \cos\Theta\cos\theta_i }{ \cos\theta_j\cos\theta_k } = \frac{ 1}{ 1+\lambda_i^2 }\cos\Theta \displaystyle\prod_{m=1}^{3}\sqrt{1+\lambda_m^2} >0.
\end{align*}
Thus  
\[
I-\cof W>0.
\]
\end{proof}

For a matrix $A=(a_{ij})_{i,j=1}^3$, denote
\[
[a_{ij}]_{i,j=1}^2:=
\begin{pmatrix}a_{11}&a_{12}\\ a_{21}&a_{22}\end{pmatrix}
\]
for its upper-left $2\times2$ block.

\begin{lemma}
\label{lem:block-schur-cofactor}
Let
\[
A=\begin{pmatrix}B&\vec b\\ {\vec b}^{\mathsf T}&d\end{pmatrix}\in\operatorname{Sym}(3),
\qquad B\in\operatorname{Sym}(2),\quad \vec b\in\mathbb R^2,\quad d\ne0.
\]
Set $\vec{p}=\vec{b}/d$ and
\[
P:=\frac1{|d|}I_2-\frac d{|d|}B+|d|\vec{p}\vec{p}^{\mathsf T}.
\]
Then the upper-left $2\times2$ block of $I_3-\cof A$ satisfies
\begin{equation}
\label{eq:block-cofactor-identity}  \left[I_3-\cof A\right]_{{i,j=1}}^2  =|d|\,\cof P,
\end{equation} 
and 
\begin{equation}
\label{eq:block-determinant-identity} \det P=1+|\vec{p}|^2+\frac{\sigma_3(A)-\sigma_1(A)}d+\frac1{d^2}.
\end{equation}
In particular, $I_3-\cof A>0$ implies $P>0$.
\end{lemma}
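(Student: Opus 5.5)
The plan is to reduce everything to $2\times 2$ linear algebra, using the Schur complement of $d$ in $A$ and the linearity of the $2\times2$ cofactor map \eqref{cof A linear in 2-2}.

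\emph{Step 1: the cofactor block.} Computing the minors directly, the entries of the upper-left $2\times2$ block of $\cof A$ are
\[
(\cof A)_{11}=b_{22}d-b_2^2,\qquad (\cof A)_{22}=b_{11}d-b_1^2,
\]
\[
(\cof A)_{12}=-(b_{12}d-b_1b_2),
\]
where $\vec b=(b_1,b_2)^{\mathsf T}$. In matrix form,
\[
[\cof A]_{i,j=1}^2=d\,\cof B-\cof(\vec b\vec b^{\mathsf T}),
\]
since $\cof(\vec b\vec b^{\mathsf T})=|\vec b|^2I_2-\vec b\vec b^{\mathsf T}$. Hence
\[
[I_3-\cof A]_{i,j=1}^2=I_2-d\,\cof B+\cof(\vec b\vec b^{\mathsf T}).
\]
On the other side, \eqref{cof A linear in 2-2} shows that $\cof$ is linear on $\operatorname{Sym}(2)$ with $\cof I_2=I_2$. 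Using this,
\[
|d|\cof P=I_2-d\,\cof B+d^2\cof(\vec p\vec p^{\mathsf T}).
\]
Since $d^2\vec p\vec p^{\mathsf T}=\vec b\vec b^{\mathsf T}$, the two expressions agree, which proves \eqref{eq:block-cofactor-identity}.

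\emph{Step 2: the determinant.} Let $S:=B-d\,\vec p\vec p^{\mathsf T}=B-\vec b\vec b^{\mathsf T}/d$ be the Schur complement, so that $\det A=d\det S$. Because $(d/|d|)\,d=|d|$, we can rewrite
\[
P=\frac1{|d|}\bigl(I_2-dS\bigr).
\]
For $2\times2$ matrices, $\det(I_2-dS)=1-d\tr S+d^2\det S$. Therefore
\[
\det P=\frac1{d^2}-\frac{\tr S}{d}+\det S.
\]
Substituting $\tr S=\tr B-d|\vec p|^2$, $\det S=\sigma_3(A)/d$, and $\tr B=\sigma_1(A)-d$ gives exactly \eqref{eq:block-determinant-identity}.

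\emph{Step 3: positivity.} If $I_3-\cof A>0$, then its principal $2\times2$ block is positive definite. By \eqref{eq:block-cofactor-identity} and $|d|>0$, it follows that $\cof P>0$. For $Q\in\operatorname{Sym}(2)$ with eigenvalues $\nu_1,\nu_2$, the matrix $\cof Q=(\tr Q)I_2-Q$ has eigenvalues $\nu_2,\nu_1$. Thus $\cof P>0$ is equivalent to $P>0$.

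I expect no real obstacle here. The only delicate point is bookkeeping of signs: the cofactor sign in the off-diagonal entry in Step 1, and the factor $d/|d|$ when rewriting $P$ through the Schur complement in Step 2. I would double-check both by testing the case where $B$ is diagonal and $\vec b=0$.
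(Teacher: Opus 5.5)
Your proposal is correct and follows essentially the paper's route: the paper also identifies the upper-left block of $\cof A$ as $d\,\cof(B-d\vec p\vec p^{\mathsf T})$ (equivalent to your $d\,\cof B-\cof(\vec b\vec b^{\mathsf T})$), uses linearity of the $2\times2$ cofactor map, uses the Schur complement identity $\sigma_3(A)=d\det(B-d\vec p\vec p^{\mathsf T})$ with the $2\times2$ determinant expansion, and proves positivity by the same eigenvalue-swap argument. The only difference is bookkeeping: the paper expresses $\sigma_1(A)$ and $\sigma_3(A)$ in terms of $P$ and subtracts, whereas you expand $\det P$ via the Schur complement $S$ and substitute.
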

\begin{proof}
Since $\vec b=d\vec{p}$, we have
\begin{equation} 
 \label{eq: B dp}
B-d\vec{p}\vec{p}^{\mathsf T}=\frac1dI_2-\frac d{|d|}P.
\end{equation}

We first prove \eqref{eq:block-cofactor-identity}. Denote
$B=(B_{ij})_{i,j=1}^2$ and $b=(b_1,b_2)^{\mathsf T}$. Expanding the
upper-left $2\times2$ block of $\cof A$ gives
\[
 [\cof A]_{i,j=1}^2 
=\begin{pmatrix}dB_{22}-b_2^2&b_1b_2-dB_{12}\\ b_1b_2-dB_{12}&dB_{11}-b_1^2\end{pmatrix}
=d\,\cof\bigl(B-d\vec{p}\vec{p}^{\mathsf T}\bigr).
\]
For every $Q\in \operatorname{Sym}(2)$, 
$\cof Q=(\sigma_1(Q))I_2-Q$. Hence the cofactor map is linear in
dimension two, and therefore
\[
d\,\cof\bigl(B-d\vec{p}\vec{p}^{\mathsf T}\bigr)
=d\,\cof\left(\frac1dI_2-\frac d{|d|}P\right)
=I_2-|d|\,\cof P.
\]
Thus the upper-left block of $I_3-\cof A$ is
$|d|\,\cof P$, which proves
\eqref{eq:block-cofactor-identity}.

We next prove \eqref{eq:block-determinant-identity}. Since
$B=d\vec{p}\vec{p}^{\mathsf T}+I_2/d-(d/|d|)P$, taking traces gives
\[
\sigma_1(A)=\sigma_1(B)+d
=\frac2d-\frac d{|d|}\sigma_1(P)+d|\vec{p}|^2+d.
\]
Subtracting $p_i$ times the third row of $A$ from its $i$th row for
$i=1,2$ and using \eqref{eq: B dp} give  
\begin{align*}
\sigma_3(A)
& =d\det\bigl(B-d\vec{p}\vec{p}^{\mathsf T}\bigr).
\\&=d\det\left(\frac1dI_2-\frac d{|d|}P\right)\\
&=d\left(\frac1{d^2}-\frac1{|d|}\sigma_1(P)+\det P\right)\\
&=\frac1d-\frac d{|d|}\sigma_1(P)+d\det P.
\end{align*} 
Therefore
\[
\sigma_3(A)-\sigma_1(A)=d\det P-\frac1d-d|\vec{p}|^2-d.
\]
Dividing by $d\ne0$ and rearranging proves
\eqref{eq:block-determinant-identity}.

Finally, suppose that $I_3-\cof A>0$. Its upper-left principal block is
positive definite, so \eqref{eq:block-cofactor-identity} gives $\cof P>0$.
If the eigenvalues of $P$ are $\lambda_1$ and $\lambda_2$, then the
eigenvalues of $\cof P$ are $\lambda_2$ and $\lambda_1$. Therefore
$\lambda_1,\lambda_2>0$, and hence $P>0$.
\end{proof}

We first establish the two-point estimate for $C^2$ solutions of \eqref{eq:main} for clarifying our ideas, and then extend it to viscosity solutions.

\begin{proposition}
\label{prop:two-point}
Let $u\in C^2(B_1)$ be a solution to \eqref{eq:main} with
$\osc_{B_1}u\le M$ with $|\Theta|<\pi/2$. Then, for any ball $B_\rho(z_0)\Subset B_1$ and any
$x,y\in\overline B_\rho(z_0)$ satisfying $|x-y|\le 2a/L$,
\begin{equation}
\label{eq:two-point} |u(y)-u(x)| \le \frac{1}{\cos\Theta} \left[ L|y-x|-\frac{L^2|y-x|^2}{4a} +\beta\left|\frac{x+y}{2}-z_0\right|^2 \right],
\end{equation}
where $a$ is a positive constant such that \( M\cos \Theta <a< 1\), $\beta $ is a positive constant such that $\frac{a}{\rho^2}<\beta<1$, and $L$ is  sufficiently large depending on $a, \beta, \Theta, \rho$.
\end{proposition}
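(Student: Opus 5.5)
The plan is to argue by contradiction with the two-point function
\[
Z(x,y)=u(y)-u(x)-\sec\Theta\Bigl[\varphi(|y-x|)+\beta\bigl|\tfrac{x+y}{2}-z_0\bigr|^2\Bigr],\qquad \varphi(r)=Lr-\frac{L^2r^2}{4a},
\]
on the compact set $\Omega=\{(x,y)\in\overline B_\rho(z_0)^2:\ |x-y|\le 2a/L\}$. Since $\Omega$ and the bracket are symmetric in $(x,y)$, it suffices to show $Z\le 0$ on $\Omega$; the reversed inequality then gives the absolute value in \eqref{eq:two-point}. Suppose instead that $\max_\Omega Z>0$, attained at $(x,y)$.

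First I rule out the boundary of $\Omega$, in three cases.
\begin{itemize}
\item On the diagonal $x=y$ we have $Z=-\sec\Theta\,\beta|x-z_0|^2\le0$.
\item On $|x-y|=2a/L$ we have $\varphi=2a-a=a$, so $Z\le M-a\sec\Theta<0$ because $a>M\cos\Theta$.
\item If $x$ or $y$ lies on $\partial B_\rho(z_0)$, then $|x-y|\le 2a/L$ forces $|\tfrac{x+y}{2}-z_0|\ge\rho-a/L$. Also $\varphi\ge0$ on $[0,2a/L]$. Hence $Z\le M-\sec\Theta\,\beta(\rho-a/L)^2$, which is negative once $L$ is large, because $\beta\rho^2>a>M\cos\Theta$.
\end{itemize}
This is exactly where both parameter constraints enter. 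So the maximum is interior with $0<|y-x|<2a/L$, and $Z$ is smooth there.

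At the interior maximum I rotate so that $e=(y-x)/|y-x|=e_3$ and write $r=|y-x|$. The first-order conditions give
\[
Du(y)=\sec\Theta\bigl(\varphi'(r)e_3+\beta(m-z_0)\bigr),\qquad Du(x)=\sec\Theta\bigl(\varphi'(r)e_3-\beta(m-z_0)\bigr),
\]
with $m=\tfrac{x+y}{2}$; these are not really needed later. The second-order condition is tested on vectors $(\xi,\eta)\in\R^3\times\R^3$. The Hessian of $\varphi(|y-x|)$ is $\varphi''\,e\otimes e+\frac{\varphi'}{r}(I-e\otimes e)$ in the variable $y-x$, and the Hessian of $\beta|m-z_0|^2$ is $\frac{\beta}{2}|\xi+\eta|^2$. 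Passing to $W=\cos\Theta\,D^2u-\sin\Theta\,I$ at $y$ and $x$, the tests give the following.
\begin{itemize}
\item For $\xi=\eta$ tangential (in $\mathrm{span}(e_1,e_2)$), the upper-left blocks satisfy $[W(y)]_{2\times2}-[W(x)]_{2\times2}\le 2\beta I_2$; the $\varphi$ term drops out since $\eta-\xi=0$.
\item For $\xi=-\eta=e_3$, we get $W_{33}(y)+W_{33}(x)\le 4\varphi''=-2L^2/a$, which is hugely negative.
\item Mixed choices $\eta=\xi+te_3$ control the off-diagonal column $\vec b$ against $W_{33}$.
\end{itemize}
Hence at least one of $d_y=W_{33}(y)$ and $-d_x=-W_{33}(x)$ is of order $L^2$ in absolute value. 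I expect both to be nonzero and large in the relevant combination, so Lemma \ref{lem:block-schur-cofactor} applies.

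Next I apply Lemma \ref{lem:block-schur-cofactor} to $A=W(y)$ and $A=W(x)$, with Lemma \ref{lem:phase-identity} supplying $I-\cof W>0$ and $\sigma_3-\sigma_1=2\sin\Theta$. This gives $P_y,P_x>0$ with
\[
\det P=1+|\vec p|^2+\frac{2\sin\Theta}{d}+\frac1{d^2}\ \ge\ \Bigl(1+\frac{\sin\Theta}{d}\Bigr)^2+\cos^2\Theta\ \ge\ \cos^2\Theta .
\]
By Minkowski's determinant inequality,
\[
\det(P_y+P_x)^{1/2}\ \ge\ \det P_y^{1/2}+\det P_x^{1/2}\ \ge\ 2\cos\Theta .
\]
This is the "strictly incompatible lower bound" from the introduction. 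When $d_y$ and $d_x$ have opposite signs, $P_y+P_x$ is essentially $\frac1{|d|}$-terms plus $-(\mathrm{sgn})B_y+(\mathrm{sgn})B_x$ plus rank-one terms. The tangential inequality $B_y-B_x\le 2\beta I_2$, combined with the mixed tests that absorb the $|d|\vec p\vec p^{\mathsf T}$ terms, should give $P_y+P_x\le 2\beta I_2+O(1/L)$. That yields $\det(P_y+P_x)\le 4\beta^2+o(1)$. The factor normalization must be arranged so the two bounds compare as $\beta$ versus something at least $1$ (after tracking the $\cos\Theta$ scaling, which is why the bracket carries $\sec\Theta$). Then $\beta<1$ and $L$ large give the contradiction.

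The main obstacle is this last step: choosing the test vectors $(\xi,\eta)$ (tangential plus a multiple of $e_3$ on each side, tuned to $\vec p_y$ and $\vec p_x$) so that the quadratic form exactly reproduces $P_y+P_x$, rank-one parts included. The second-order inequality must also pin down the signs of $d_y$ and $d_x$ and their size $\sim L^2$, so that all error terms are $O(1/L)$. I would first try $\xi=\zeta-\vec p_x\cdot\zeta\,e_3$ and $\eta=\zeta-\vec p_y\cdot\zeta\,e_3$ for tangential $\zeta$, mimicking the Schur-complement row operation in the proof of Lemma \ref{lem:block-schur-cofactor}.
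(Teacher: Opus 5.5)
Your outer structure matches the paper. The function $Z$, the three boundary cases (where $a>M\cos\Theta$ and $\beta\rho^2>a$ enter), and the use of Lemmas~\ref{lem:phase-identity} and~\ref{lem:block-schur-cofactor} at both points are the same. Even your test vectors $\xi=\zeta-(\vec p_x\cdot\zeta)e_3$, $\eta=\zeta-(\vec p_y\cdot\zeta)e_3$ are exactly the paper's. The interior step, however, has genuine gaps.

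\textbf{Signs and sizes of $W_{33}$.} You never determine $d_x=W_{33}(x)$ and $d_y=W_{33}(y)$ separately. Your test $\xi=-\eta=e_3$ actually gives $W_{33}(y)-W_{33}(x)\le-2L^2/a$ (a difference, not a sum). That bounds only their difference and fixes neither sign. The missing choice is the one-sided variations $(\xi,\eta)=(e_3,0)$ and $(0,e_3)$, which give
\[
W_{33}(x)\ge\frac{L^2}{2a}-\sin\Theta-\frac\beta2,\qquad -W_{33}(y)\ge\frac{L^2}{2a}+\sin\Theta-\frac\beta2 .
\]
These are needed to apply Lemma~\ref{lem:block-schur-cofactor} at all and to fix the form of $P(x)$ and $P(y)$. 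Crucially, they also make $1/W_{33}(x)-1/W_{33}(y)$ and $2\sin\Theta/d+1/d^2$ of size $O(a/L^2)$.

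\textbf{The two bounds you compare do not match.} With your test vectors, $|\xi+\eta|^2=4|\zeta|^2+4(\vec q\cdot\zeta)^2$ with $\vec q=(\vec p_x+\vec p_y)/2$. The terms $\sin\Theta(|\eta|^2-|\xi|^2)$ and $-\frac{L^2}{2a}(\eta_3-\xi_3)^2$ must be combined by completing the square. The result is
\[
P(x)+P(y)\le\Bigl(2\beta+\frac{8a}{L^2}\Bigr)\bigl(I_2+\vec q\vec q^{\mathsf T}\bigr),
\]
not $2\beta I_2+O(1/L)$. The rank-one part is unavoidable, and $|\vec q|$ is not controlled.

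Against a determinant of order $1+|\vec q|^2$, your bound $\det P\ge\cos^2\Theta$ is useless. Even if the upper bound were $2\beta I_2$, comparing $2\beta$ with $2\cos\Theta$ would require $\beta<\cos\Theta$. That forces $M\cos\Theta<a<\beta\rho^2<\cos\Theta$, i.e.\ only the weaker threshold $M<1$, and it does not prove the proposition for $\beta\in[\cos\Theta,1)$. Your intermediate step is also off: the identity is $\det P=|\vec p|^2+(1+\sin\Theta/d)^2+\cos^2\Theta/d^2$, with $\cos^2\Theta/d^2$ rather than $\cos^2\Theta$.

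\textbf{The sharp lower bound.} This uses the large $|d|$ from the one-sided variations. After reducing to $\Theta\ge0$ (so $\sin\Theta/W_{33}(x)\ge0$), \eqref{eq:block-determinant-identity} gives $\det P(z)\ge1+|\vec p(z)|^2-8a\sin\Theta/L^2$ at both points. Minkowski's inequality plus convexity of $\vec p\mapsto\sqrt{1+|\vec p|^2}$ then yields
\[
\sqrt{\det(P(x)+P(y))}\ge2\bigl(1-O(a/L^2)\bigr)\sqrt{1+|\vec q|^2}.
\]
The factor $\sqrt{1+|\vec q|^2}=\sqrt{\det(I_2+\vec q\vec q^{\mathsf T})}$ cancels against the upper bound. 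This leaves $1-\beta\le O(a/L^2)$, which contradicts $\beta<1$ for large $L$. Matching the $\vec q\vec q^{\mathsf T}$ term against the $|\vec p|^2$ term in $\det P$ is what yields the sharp threshold $\sec\Theta$.
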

\begin{proof}
Replacing $u$ by $-u$ if necessary,  we may restrict attention to the case $0\leq\Theta<\pi/2$. 
By the translation invariance of   \eqref{eq:main} and the symmetry of the right-hand side of \eqref{eq:two-point} in $x$ and $y$, it suffices to prove the estimate for $z_0=0$.

Define
\begin{equation}\label{eq:Z-def}
Z(x,y):= u(y)-u(x) -\frac{1}{\cos\Theta}
\left[
L|y-x|-\frac{L^2|y-x|^2}{4a}
+\beta\left|\frac{x+y}{2}\right|^2
\right]. 
\end{equation}

Since $Z$ is continuous, its maximum on this set is attained.
By \eqref{eq:Z-def}, estimate \eqref{eq:two-point} is equivalent to $Z\le0$.

\vspace{6pt}

\emph{Step 1:
We show that a positive
maximum cannot occur on the boundary or the diagonal.}

\vspace{8pt}
Indeed, if $|x-y|=2a/L$, then, since $\beta\ge0$,
\[
\begin{aligned}
Z(x,y) &\le M-\frac{1}{\cos\Theta} \left(L|y-x|-\frac{L^2|y-x|^2}{4a}\right) \le M-\frac{a}{\cos\Theta}<0.
\end{aligned}
\]

If $x\in\partial B_{\rho}$ or $y\in \partial B_{\rho}$, then
\[
\left|\frac{x+y}{2}\right| \ge \rho-\frac{|x-y|}{2} \ge \rho-\frac{a}{L}>0, 
\]
where $L$ is chosen sufficiently large so that $\rho-\frac{a}{L}>0$.

Since $|x-y|\le 2a/L$,
\[
L|y-x|-\frac{L^2|y-x|^2}{4a} = L|y-x|\left(1-\frac{L|y-x|}{4a}\right) \ge 0.
\]
Hence
\[
\begin{aligned}
Z(x,y) &\le M-\frac{\beta}{\cos\Theta} \left|\frac{x+y}{2}\right|^2 \le M-\frac{\beta}{\cos\Theta} \left(\rho-\frac{a}{L}\right)^2 <M-\frac{a}{\cos\Theta}<0,
\end{aligned}
\]
Here $L$ is chosen sufficiently large so that $\beta \left(\rho-\frac{a}{L}\right)^2>a$; later, we choose $\beta$, $a$, and $\rho$ so that $\beta\rho^2>a$.

If $x=y$, then 
\[
Z(x,x)= -\frac{\beta}{\cos\Theta}|x|^2\le0.
\]
Therefore, if $Z$ is positive somewhere, it attains a positive maximum at a pair of distinct interior points $x,y$.

\vspace{6pt}

\emph{Step 2:
We show that a positive
maximum cannot occur in the interior.}

\vspace{8pt}

At such an interior maximum point $(x,y)$, choose coordinates so that
\[
e_3=\frac{y-x}{|y-x|}.
\]
For $v=(v_1,v_2,v_3)\in\mathbb R^3$, denote $v_{1,2}:=(v_1,v_2)\in\mathbb R^2$.  Given $\xi,\eta\in\mathbb R^3$, consider the variation
\[
x(t)=x+t\xi,\qquad y(t)=y+t\eta.
\]
Since $Z$ attains a local maximum at $(x,y)$,
\[
\left.\frac{d^2}{dt^2}Z(x(t),y(t))\right|_{t=0}\le 0.
\]

We first note that
\[
\left. \frac{d^2}{dt^2} \bigl(u(y(t))-u(x(t))\bigr) \right|_{t=0} = \eta^T D^2u(y)\eta-\xi^T D^2u(x)\xi .
\]
Moreover,
\[
	 \left. \frac{d}{dt}|y(t)-x(t)| \right|_{t=0} = \eta_3-\xi_3,
	\quad  	\text{and}\quad
	 \left. \frac{d^2}{dt^2}|y(t)-x(t)| \right|_{t=0} = \frac{|(\eta-\xi)_{1,2}|^2}{|y-x|}.
 \]
Hence
\begin{align*}
    &\left. \frac{d^2}{dt^2} \left( L|y(t)-x(t)| -\frac{L^2}{4a}|y(t)-x(t)|^2 \right) \right|_{t=0} \\
    \qquad=& \left( \frac{L}{|y-x|}-\frac{L^2}{2a} \right) |(\eta-\xi)_{1,2}|^2 -\frac{L^2}{2a}(\eta_3-\xi_3)^2.
\end{align*}
Finally,
\[
\left. \frac{d^2}{dt^2} \left| \frac{x(t)+y(t)}2 \right|^2 \right|_{t=0} = \frac12|\xi+\eta|^2.
\]
Therefore
\begin{align}
    0\ge{}& \left. \frac{d^2}{dt^2}Z(x(t),y(t))
    \right|_{t=0} \notag\\
    ={}& \eta^TD^2u(y)\eta-\xi^TD^2u(x)\xi -\frac1{\cos\Theta} \left( \frac{L}{|y-x|}-\frac{L^2}{2a} \right) |(\eta-\xi)_{1,2}|^2\nonumber\\
    &+\frac1{\cos\Theta} \frac{L^2}{2a}(\eta_3-\xi_3)^2 -\frac1{\cos\Theta} \frac{\beta}{ 2 } |\xi+\eta|^2 . \label{eq:second-variation-Z}
\end{align}

Define
\begin{equation}\label{eq:w-def}
w:=\cos\Theta\,u-\frac{\sin\Theta}{2}|x|^2.
\end{equation}
Then
\begin{equation}\label{eq:D2w}
D^2w=\cos\Theta D^2u-\sin\Theta I.
\end{equation}
Since $\cos\Theta>0$, multiplying  \eqref{eq:second-variation-Z} by $\cos\Theta$ and using \eqref{eq:D2w} yields
\begin{align}\label{eq:joint-second-derivative}
&\eta^{\mathsf T}D^2w(y)\eta
-\xi^{\mathsf T}D^2w(x)\xi
+\sin\Theta\bigl(|\eta|^2-|\xi|^2\bigr)
\notag\\
\le& 
\left(
\frac{L}{|y-x|}-\frac{L^2}{2a}
\right)
|(\eta-\xi)_{1,2}|^2
-\frac{L^2}{2a}(\eta_3-\xi_3)^2
+\frac{\beta}{2}|\xi+\eta|^2.
\end{align}

Taking $(\xi,\eta)=(e_3,0)$  in \eqref{eq:joint-second-derivative}, we obtain
 \[
 -w_{33}(x)-\sin\Theta \le -\frac{L^2}{2a}+\frac{\beta}{2},
 \]
 and hence, by choosing $L$ sufficiently large such that $$\frac{L^2}{2a}-\sin\Theta-\frac{\beta}{2} >\frac{L^2}{4a}>2 \sin \Theta,$$
 we have 
 \begin{equation}
 \label{eq:dx-lower} w_{33}(x) \ge \frac{L^2}{2a}-\sin\Theta-\frac{\beta}{2}>\frac{L^2}{4a} 
 \end{equation}
Choosing $(\xi,\eta)=(0,e_3)$ yields
 \[
 w_{33}(y)+\sin\Theta \le -\frac{L^2}{2a}+\frac{\beta}{2},
 \]
 so that
 \begin{equation}
 \label{eq:dy-lower} 
 -w_{33}(y) \ge \frac{L^2}{2a}+\sin\Theta-\frac{\beta}{2}>\frac{L^2}{4a}.
 \end{equation}

We next choose variations that eliminate the mixed entries
$w_{13}$ and $w_{23}$. Define
\begin{equation}
\label{eq:p-def} \vec{p} :=\frac{1}{w_{33} }\bigl(w_{13} ,w_{23} \bigr).
\end{equation}
For $v=(v_1,v_2)\in\R^2$, take
\begin{equation}\label{eq:lifted-vectors}
\xi=(v_1,v_2,-\vec{p}(x)\cdot v),
\qquad
\eta=(v_1,v_2,-\vec{p}(y)\cdot v).
\end{equation}
  Define
\begin{equation}
\label{eq:P-def}
P:=\frac1{|w_{33}|}I_2-\frac{w_{33}}{|w_{33}|}[w_{ij}]_{i,j=1}^2+|w_{33}|\vec p{\vec p}^{\mathsf T}.
\end{equation}

The definition \eqref{eq:p-def} gives
$w_{i3}(x)=w_{33}(x)(\vec{p}(x))_i$ for $i=1,2$. Therefore
\begin{align}
\xi^{\mathsf T}D^2w(x)\xi &= v^{\mathsf T} \left( [w_{ij}(x)]_{i,j=1}^2-w_{33}(x)\vec{p}(x)\vec{p}(x)^{\mathsf T} \right)v \notag\\
&= v^{\mathsf T} \left( \frac1{w_{33}(x)}I_2-P(x) \right)v. \label{eq:w-x-lift}
\end{align}
Similarly,
\begin{align}
\eta^{\mathsf T}D^2w(y)\eta &= v^{\mathsf T} \left( [w_{ij}(y)]_{i,j=1}^2-w_{33}(y)\vec{p}(y)\vec{p}(y)^{\mathsf T} \right)v \notag\\
&= v^{\mathsf T} \left( P(y)+\frac1{w_{33}(y)}I_2 \right)v. \label{eq:w-y-lift}
\end{align}

By the choice of $\eta$ and $\xi$, we have
\begin{align*}
|\eta|^2-|\xi|^2 &=(\vec{p}(y)\cdot v)^2-(\vec{p}(x)\cdot v)^2,\\
|(\eta-\xi)_{1,2}|^2 &=0,\,\, (\eta_3-\xi_3)^2 =((\vec{p}(y)-\vec{p}(x))\cdot v)^2,\\
|\xi+\eta|^2 &=4|v|^2+((\vec{p}(x)+\vec{p}(y))\cdot v)^2.
\end{align*}
Substituting these identities into
\eqref{eq:joint-second-derivative} gives
\begin{align}
&v^{\mathsf T}(P(x)+P(y))v +\left(\frac1{w_{33}(y)}-\frac1{w_{33}(x)}\right)|v|^2 +\sin\Theta \bigl((\vec{p}(y)\cdot v)^2-(\vec{p}(x)\cdot v)^2\bigr) \notag\\
\le& -\frac{L^2}{2a} ((\vec{p}(y)-\vec{p}(x))\cdot v)^2 +\frac{\beta}{2} \left[ 4|v|^2+((\vec{p}(x)+\vec{p}(y))\cdot v)^2 \right]. \label{eq:lifted-inequality}
\end{align}

Set
\begin{equation}
\label{eq:q-def} \vec q:=\frac{\vec{p}(x)+\vec{p}(y)}{2}.
\end{equation}
Using
\[
(\vec{p}(y)\cdot v)^2-(\vec{p}(x)\cdot v)^2=2(\vec q\cdot v)\bigl((\vec{p}(y)-\vec{p}(x))\cdot v\bigr)
\]
and $\vec{p}(x)+\vec{p}(y)=2\vec q$, the last term on the right-hand side of \eqref{eq:lifted-inequality} becomes
\[
\frac{\beta}{2}\left[4|v|^2+((\vec{p}(x)+\vec{p}(y))\cdot v)^2\right]=2\beta|v|^2+2\beta(\vec q\cdot v)^2.
\]
Rearranging \eqref{eq:lifted-inequality}, we obtain
\begin{align*}
v^{\mathsf T}(P(x)+P(y))v \le{}& \left(2\beta+\frac1{w_{33}(x)}-\frac1{w_{33}(y)}\right)|v|^2+2\beta(q\cdot v)^2\\
&-\left[\frac{L^2}{2a}\bigl((\vec{p}(y)-\vec{p}(x))\cdot v\bigr)^2+\sin\Theta\bigl((\vec{p}(y)\cdot v)^2-(\vec{p}(x)\cdot v)^2\bigr)\right].
\end{align*}
We complete the square in the bracketed expression as follows:
\begin{align}
&\frac{L^2}{2a} ((\vec{p}(y)-\vec{p}(x))\cdot v)^2 +\sin\Theta \bigl((\vec{p}(y)\cdot v)^2-(\vec{p}(x)\cdot v)^2\bigr) \notag\\
 = &\frac{L^2}{2a} \left( (\vec{p}(y)-\vec{p}(x))\cdot v +\frac{\sin\Theta}{L^2/(2a)}\vec{q}\cdot v \right)^2 -\frac{\sin^2\Theta}{L^2/(2a)} (\vec{q}\cdot v)^2. \label{eq:complete-square}
\end{align}
Substituting \eqref{eq:complete-square} into the preceding inequality gives
\begin{equation}
\label{eq:PxPy-upper-quadratic}
\begin{aligned}
	v^{\mathsf T}(P(x)+P(y))v \le{}& \left(2\beta+\frac1{w_{33}(x)}-\frac1{w_{33}(y)}\right)|v|^2+\left(2\beta+\frac{\sin^2\Theta}{L^2/(2a)}\right)(\vec{q}\cdot v)^2\\
	&-\frac{L^2}{2a}\left((\vec{p}(y)-\vec{p}(x))\cdot v+\frac{\sin\Theta}{L^2/(2a)}\vec{q}\cdot v\right)^2 \\	
  \le& \left(2\beta+\frac1{w_{33}(x)}-\frac1{w_{33}(y)}\right)|v|^2+\left(2\beta+\frac{\sin^2\Theta}{L^2/(2a)}\right)(\vec{q}\cdot v)^2.
\end{aligned}
\end{equation}
By \eqref{eq:dy-lower} and \eqref{eq:dx-lower},
\[
\frac1{w_{33}(x)}-\frac1{w_{33}(y)} \leq\frac{8a}{L^2}.
\]
Therefore \eqref{eq:PxPy-upper-quadratic} gives
\begin{equation}
	\label{eq:PxPy-matrix-upper} P(x)+P(y)\leq\left(2\beta+\frac{8a}{L^2}\right)(I_2+\vec{q}\vec{q}^{\mathsf T}).
\end{equation}

We next use the algebraic identities satisfied by $D^2w$.
By \eqref{eq:D2w} and
Lemma~\ref{lem:phase-identity},
\begin{equation}
\label{eq:w-phase-identities} 
 \sigma_3(D^2w(z))-\sigma_1(D^2w(z))=2\sin\Theta, \qquad I_3-\cof D^2w(z)>0, \qquad z\in\{x,y\}.
\end{equation}

For $A=D^2w(z)$, the entry $d$ in Lemma~\ref{lem:block-schur-cofactor} is $w_{33}(z)$. By \eqref{eq:dx-lower} and \eqref{eq:dy-lower}, $w_{33}(z)\neq0$ for $z\in\{x,y\}$, so Lemma~\ref{lem:block-schur-cofactor} applies at both points. By \eqref{eq:P-def}, the corresponding matrix $P$ is precisely $P(z)$. 

Together with \eqref{eq:w-phase-identities}, Lemma~\ref{lem:block-schur-cofactor} gives $P(z)>0$ and
\begin{equation}
\label{eq:det-Pz} \det P(z)=1+|p(z)|^2+\frac{2\sin\Theta}{w_{33}(z)}+\frac1{w_{33}(z)^2}, \qquad z\in\{x,y\}.
\end{equation}

Since $P(x)$ and $P(y)$ are positive definite, taking determinants of both sides of \eqref{eq:PxPy-matrix-upper} gives
\begin{align}
\label{eq:det-upper} 
\sqrt{\det(P(x)+P(y))} &\le \left( 2\beta+\frac{8a}{L^2} \right) \sqrt{\det(I_2+\vec{q}\vec{q}^{\mathsf T})}\\
&\le 2\left( \beta+\frac{4a}{L^2} \right) \sqrt{1+|\vec{q}|^2},
\end{align}
where \(\det(I_2+\vec{q}\vec{q}^{\mathsf T})=1+|\vec{q}|^2.\)

We next derive the lower bound.
Since $w_{33}(x)>0$, \eqref{eq:det-Pz} with $z=x$ gives
\begin{equation}
\label{eq:detPx-lower} \sqrt{\det P(x)} \ge \sqrt{1+|\vec{p}(x)|^2}.
\end{equation}
Taking $z=y$ in \eqref{eq:det-Pz} and using
\eqref{eq:dy-lower}, we obtain
\[
\det P(y)
=1+|\vec{p}(y)|^2-\frac{2\sin\Theta}{-w_{33}(y)}
+\frac1{w_{33}(y)^2}
\ge 1+|\vec{p}(y)|^2-\frac{8a\sin\Theta}{L^2}.
\]

Hence
\begin{equation}
\label{eq:detPy2} \sqrt{\det P(y)} \ge \sqrt{1+|\vec{p}(y)|^2} -\frac{8a\sin\Theta}{L^2},
\end{equation}
because  
\begin{align*}
 &\sqrt{1+|\vec{p}(y)|^2}-\sqrt{1+|\vec{p}(y)|^2-\frac{8a\sin\Theta}{L^2}} 
\le\frac{8a\sin\Theta}{L^2}.
\end{align*}
Since $P(x),P(y)>0$, the concavity of $\det^{1/2}$, together with \eqref{eq:detPx-lower} and \eqref{eq:detPy2}, gives
\begin{align}
\sqrt{\det(P(x)+P(y))} &\ge \sqrt{\det P(x)}+\sqrt{\det P(y)}\\
&\ge \sqrt{1+|\vec{p}(x)|^2} +\sqrt{1+|\vec{p}(y)|^2} -\frac{8a\sin\Theta}{L^2}\\
&\ge 2\sqrt{1+\left|\frac{\vec{p}(x)+\vec{p}(y)}{2}\right|^2}-\frac{8a\sin\Theta}{L^2}\\
&= 2\sqrt{1+|\vec{q}|^2}-\frac{8a\sin\Theta}{L^2}.
\end{align}

Therefore
\begin{align}
\sqrt{\det(P(x)+P(y))} \ge 2\left( 1-\frac{8a\sin\Theta}{L^2} \right) \sqrt{1+|\vec q|^2}. \label{eq:det-lower}
\end{align}

Comparing \eqref{eq:det-upper} and
\eqref{eq:det-lower} gives
\[
2\left( \beta+\frac{4a}{L^2} \right) \ge  2\left( 1-\frac{8a\sin\Theta}{L^2} \right).
\]
That is,
\[
1-\beta \le \frac{4a} {L^2}\bigl(1+2\sin\Theta\bigr).
\] 
This contradicts the choice $\beta<1$ when $L$ is sufficiently large. Hence $Z$ cannot have a positive
interior maximum.
\end{proof}

\begin{remark}\label{remark}
    One may consider the two-point auxiliary function
\[
Z(x,y)=u(x)-u(y)-\phi(|x-y|)-a(|x|^2+|y|^2),
\]
where $\phi$ is chosen appropriately in the proof and $a>0$ is a constant. 
Auxiliary functions of this form have been used in the theory of viscosity
solutions; see, for example, \cite{CrandallIshiiLions}.
The auxiliary function \eqref{eq:Z-def} used in this paper can also be
written in this form by the identity
\[
|x|^2+|y|^2
=\frac12|x-y|^2+\frac12|x+y|^2.
\]
\end{remark}

Before treating the viscosity case, we recall a classical result of
Crandall--Ishii--Lions in the form needed below. For the reader's convenience, we just write the formula that we need. For $x_0\in\Omega$, let $J^{\,2,-}u(x_0)$ denote the set of all $(p,X)\in\mathbb R^n\times\operatorname{Sym}(n)$ such that
\[
u(z)\ge u(x_0)+p\cdot(z-x_0)+\frac12(z-x_0)^{\mathsf T}X(z-x_0)+o(|z-x_0|^2)
\qquad\text{as }z\to x_0.
\]
Similarly, $J^{\,2,+}u(x_0)$ denotes the set of all $(p,X)\in\mathbb R^n\times\operatorname{Sym}(n)$ such that
\[
u(z)\le u(x_0)+p\cdot(z-x_0)+\frac12(z-x_0)^{\mathsf T}X(z-x_0)+o(|z-x_0|^2)
\qquad\text{as }z\to x_0.
\]
Finally, $(p,X)\in\overline J^{\,2,\pm}u(x_0)$ if there exist $x_k\to x_0$ and $(p_k,X_k)\to(p,X)$ such that $(p_k,X_k)\in J^{\,2,\pm}u(x_k)$. 

\begin{lemma}[Theorem 3.2 in \cite{CrandallIshiiLions}]
\label{lem:theorem-of-sums}
Let $\Omega\subset\mathbb R^n$, $u\in C(\Omega)$, and
$\Phi\in C^2(\Omega\times\Omega)$. Assume that
\[
Z(x,y)=u(y)-u(x)-\Phi(x,y)
\]
has a local maximum at $(\bar x,\bar y)\in\Omega\times\Omega$.

Set
\[
A=D^2_{(x,y)}\Phi(\bar x,\bar y).
\]
Then, for every $\delta>0$, there exist symmetric matrices
$X,Y\in\operatorname{Sym}(n)$ such that
\begin{equation}\label{eq:theorem-sums-inclusions}
\bigl(-D_x\Phi(\bar x,\bar y),X\bigr)\in\overline J^{\,2,-}u(\bar x), \qquad \bigl(D_y\Phi(\bar x,\bar y),Y\bigr)\in\overline J^{\,2,+}u(\bar y).
\end{equation}
Moreover,
\begin{equation}\label{eq:theorem-sums-matrix}
- \left(\frac1\delta+\|A\|\right)I_{2n}
\le
\begin{pmatrix}
-X&0\\
0&Y
\end{pmatrix}
\le
A+\delta A^2.
\end{equation}
\end{lemma}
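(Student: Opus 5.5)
We would not prove this from scratch but reduce it to the general theorem of sums of Crandall--Ishii--Lions, and sketch how that theorem is proved. Set $u_1(x):=-u(x)$ and $u_2(y):=u(y)$. Both are continuous, hence upper semicontinuous, and
\[
Z(x,y)=u_1(x)+u_2(y)-\Phi(x,y).
\]
Since $J^{\,2,+}(-u)(\bar x)=-J^{\,2,-}u(\bar x)$, and likewise for the closures, a pair $(D_x\Phi,\,X_1)\in\overline J^{\,2,+}u_1(\bar x)$ is the same as $(-D_x\Phi,\,X)\in\overline J^{\,2,-}u(\bar x)$ with $X:=-X_1$. This sign change is why the block matrix in \eqref{eq:theorem-sums-matrix} carries $-X$. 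It therefore suffices to prove the statement for $u_1(x)+u_2(y)-\Phi$ with jets in $\overline J^{\,2,+}$.

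\emph{Normalization.} First we make the maximum strict and local by subtracting $|x-\bar x|^4+|y-\bar y|^4$; this does not change the first and second derivatives at $(\bar x,\bar y)$. Next we subtract the affine parts $D_x\Phi\cdot(x-\bar x)$ and $D_y\Phi\cdot(y-\bar y)$ from $u_1,u_2$, and translate so that $(\bar x,\bar y)=0$ and $Z(0)=0$. By Taylor expansion we may then replace $\Phi$ by $\tfrac12\langle A z,z\rangle+o(|z|^2)$, with $z=(x,y)$. Absorbing the $o(|z|^2)$ error into an arbitrarily small multiple $\varepsilon|z|^2$, we get
\[
u_1(x)+u_2(y)\le \tfrac12\langle (A+\varepsilon I)z,z\rangle
\]
near $0$, with equality at $0$. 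At the end we let $\varepsilon\to0$, which is harmless because the resulting jets and matrices stay in a compact set.

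\emph{Matrix inequality and sup-convolution.} For $\lambda:=1/\delta+\|A\|$ we use the elementary bound
\[
\langle Az,z\rangle\le\langle (A+\delta A^2)\zeta,\zeta\rangle+\lambda|z-\zeta|^2 .
\]
This lets us replace $u_i$ by its sup-convolution
\[
\hat u_i(\zeta_i)=\sup_{z_i}\Bigl(u_i(z_i)-\tfrac{\lambda}{2}|z_i-\zeta_i|^2\Bigr).
\]
Each $\hat u_i$ is semiconvex with constant $\lambda$. Moreover $\hat u_1(\zeta_1)+\hat u_2(\zeta_2)-\tfrac12\langle (A+\delta A^2)\zeta,\zeta\rangle$ still has a (strict, after the quartic term) local maximum at $0$.

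\emph{Jensen--Aleksandrov step.} This is the step we expect to be the main obstacle. We apply Jensen's lemma to this semiconvex function: for a set of small linear perturbations of positive measure, the perturbed function attains its maximum near $0$. By Aleksandrov's theorem, such maximum points $\zeta^k\to0$ can be chosen where each $\hat u_i$ is twice differentiable, with gradients tending to $0$. At these points the Hessians satisfy
\[
-\lambda I\le D^2\hat u_i(\zeta_i^k),
\qquad
\operatorname{diag}\bigl(D^2\hat u_1,D^2\hat u_2\bigr)\le A+\delta A^2+o(1).
\]

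\emph{Transfer and limit.} We then use the "magic property" of sup-convolution: if $(p,X)\in J^{\,2,+}\hat u_i(\zeta)$, then $(p,X)\in J^{\,2,+}u_i(\zeta+p/\lambda)$. This moves the jets to nearby points of $u_i$. Upper semicontinuity of $u_i$, together with strictness of the maximum, forces those points to converge to $0$ and the values $u_i$ there to converge to $u_i(0)$. Passing to a subsequence, the bounded Hessians converge to matrices $X_1,Y$ satisfying
\[
-\lambda I_{2n}\le\operatorname{diag}(X_1,Y)\le A+\delta A^2,
\]
and $(0,X_1)\in\overline J^{\,2,+}u_1(0)$, $(0,Y)\in\overline J^{\,2,+}u_2(0)$. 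Undoing the normalization and setting $X=-X_1$ gives \eqref{eq:theorem-sums-inclusions} and \eqref{eq:theorem-sums-matrix}.
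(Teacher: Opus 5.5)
Your proposal is correct and matches the paper, which simply invokes Theorem~3.2 of Crandall--Ishii--Lions. Your choice $u_1=-u$, $u_2=u$, $X=-X_1$ is exactly the sign bookkeeping that yields $(-D_x\Phi,X)\in\overline J^{\,2,-}u(\bar x)$ and the block matrix $\operatorname{diag}(-X,Y)$, and your sketch (sup-convolution, Jensen's lemma with Aleksandrov, and the transfer $\hat u_i(\zeta)=u_i(\zeta+p/\lambda)-|p|^2/(2\lambda)$) is the standard User's Guide proof; the only routine details left implicit are localizing $u_i$ to a compact ball so the sup-convolution is finite, and using $\lambda_\varepsilon=1/\delta+\|A+\varepsilon I\|$ before letting $\varepsilon\to0$.
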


\medskip

\begin{proof}[Proof of Theorem \ref{thm:small-height}]
{
Replacing $u$ by $-u$ if necessary, we may assume
$0\leq\Theta<\pi/2$. By the scaling $u(x)\mapsto u(Rx)/R^2$, it suffices to prove the estimate in $B_1$.

Set
\[
a=\frac{1+M\cos\Theta}{2},
\qquad
\rho=\sqrt{\frac{1+a}{2}}
=\sqrt{\frac{3+M\cos\Theta}{4}},
\]
and choose $a/\rho^2<\beta<1$. 
We then take $L$ sufficiently large so that the hypotheses of
Proposition~\ref{prop:two-point} are satisfied and
\[
1-\beta>
\frac{1+\sin\Theta}
{L^2/(2a)-\sin\Theta-\beta/2}.
\]
}

We begin by proving Proposition~\ref{prop:two-point} for viscosity solutions.   Since the two-point estimate \eqref{eq:two-point} has already been proved for $C^2$ solutions, we describe only the modifications required in the viscosity setting.
 
Suppose, to the contrary, that $Z$ attains a positive
interior maximum at two distinct points $x,y$. For every $\varepsilon>0$,
applying Lemma \ref{lem:theorem-of-sums} to \eqref{eq:Z-def} with
parameter $\delta=\varepsilon\cos\Theta$, there exist
$A_x,A_y\in\operatorname{Sym}(3)$ such that
\[
\sum_{i=1}^{3}\arctan\lambda_i(A_x)\leq\Theta\leq\sum_{i=1}^{3}\arctan\lambda_i(A_y).
\]
Since $t\mapsto\sum_{i=1}^{3}\arctan\lambda_i(A+tI)$ is strictly increasing in $t$, there exist $t_x,t_y\geq0$ such that
\[
\sum_{i=1}^{3}\arctan\lambda_i(A_x+t_xI)=\Theta=\sum_{i=1}^{3}\arctan\lambda_i(A_y-t_yI).
\]
Since $t_x,t_y\geq0$, for every $\xi,\eta\in\mathbb R^3$,
\[
\eta^{\mathsf T}(A_y-t_yI)\eta-\xi^{\mathsf T}(A_x+t_xI)\xi
\leq\eta^{\mathsf T}A_y\eta-\xi^{\mathsf T}A_x\xi.
\]
Replace $A_x$ by $A_x+t_xI$ and $A_y$ by $A_y-t_yI$. Then
\[
\sum_{i=1}^{3}\arctan\lambda_i(A_x)=\Theta=\sum_{i=1}^{3}\arctan\lambda_i(A_y).
\]

Set $W_x:=\cos\Theta\,A_x-\sin\Theta I$ and $W_y:=\cos\Theta\,A_y-\sin\Theta I$, and define
\[
\beta_\varepsilon:=\beta+\varepsilon\beta^2,\qquad
N_\varepsilon:=\frac{L^2}{2a}-\sin\Theta-\frac{\beta}{2}-\varepsilon\left\{2\left(\frac{L^2}{2a}\right)^2+\frac{\beta^2}{2}\right\}.
\]
After multiplying by $\cos\Theta$ and using the definitions of $W_x$ and $W_y$, Lemma~\ref{lem:theorem-of-sums} gives, for every
$\xi,\eta\in\mathbb R^3$ satisfying $(\eta-\xi)_{1,2}=0$,
\[
\begin{aligned}
	&\eta^{\mathsf T}W_y\eta-\xi^{\mathsf T}W_x\xi+\sin\Theta\bigl(|\eta|^2-|\xi|^2\bigr)\\
	&\quad\leq-\frac{L^2}{2a}(\eta_3-\xi_3)^2+\frac{\beta}{2}|\xi+\eta|^2
	+\varepsilon\left\{2\left(\frac{L^2}{2a}\right)^2(\eta_3-\xi_3)^2+\frac{\beta^2}{2}|\xi+\eta|^2\right\}\\
	&\quad=-\left(N_\varepsilon+\sin\Theta+\frac{\beta_\varepsilon}{2}\right)(\eta_3-\xi_3)^2+\frac{\beta_\varepsilon}{2}|\xi+\eta|^2.
\end{aligned}
\]
Taking $(\xi,\eta)=(e_3,0)$ and $(\xi,\eta)=(0,e_3)$ gives
\[
(W_x)_{33}\geq N_\varepsilon,\qquad -(W_y)_{33}\geq N_\varepsilon+2\sin\Theta\geq N_\varepsilon.
\]
The phase identities in Lemma~\ref{lem:phase-identity} apply to both $W_x$ and $W_y$.

Since $\beta_\varepsilon\to\beta$ and $N_\varepsilon\to \frac{L^2}{2a}-\sin\Theta-\frac{\beta}{2}$ as $\varepsilon\downarrow0$, choose $\varepsilon>0$ sufficiently small that
\begin{equation}\label{condition}
N_\varepsilon>2\sin\Theta,\qquad
1-\beta_\varepsilon>\frac{1+\sin\Theta}{N_\varepsilon}.
\end{equation}

Define $\vec p(z)$, $P(z)$, and $\vec q$ as in the smooth case, with
$D^2w(z)$ replaced by $W_z$. The same determinant comparison gives
\[
\begin{aligned}
\sqrt{\det(P(x)+P(y))}
&\leq2\left(\beta_\varepsilon+\frac1{N_\varepsilon}\right)
\sqrt{1+|\vec q|^2},\\
\sqrt{\det(P(x)+P(y))}
&\geq2\left(1-\frac{\sin\Theta}{N_\varepsilon}\right)
\sqrt{1+|\vec q|^2}.
\end{aligned}
\]
Therefore
\[
1-\beta_\varepsilon\leq
\frac{1+\sin\Theta}{N_\varepsilon},
\]
which contradicts the second inequality in \eqref{condition}. Hence $Z\leq0$. The same argument applies with $|(x+y)/2|^2$ replaced by $|(x+y)/2-z_0|^2$.

Take $x,y\in B_{(1-\rho)/2}$. Since $1/2<\rho<1$, we have $B_\rho(x)\Subset B_1$ and $x,y\in B_\rho(x)$. If $|x-y|\leq2a/L$, applying Proposition~\ref{prop:two-point} with $z_0=x$, we obtain
\[
\begin{aligned}
|u(y)-u(x)|
&\leq\frac1{\cos\Theta}\left(L|y-x|-\frac{L^2|y-x|^2}{4a}+\beta\left|\frac{x+y}{2}-x\right|^2\right)\\
&=\frac1{\cos\Theta}\left(L|y-x|-\frac{L^2|y-x|^2}{4a}+\frac{\beta|y-x|^2}{4}\right)\\
&\leq\frac1{\cos\Theta}\left(L|y-x|+\frac{\beta}{4}\frac{2a}{L}|y-x|\right)\\
&=\frac1{\cos\Theta}\left(L+\frac{a\beta}{2L}\right)|y-x|.
\end{aligned}
\]
If $|x-y|>2a/L$, then $L|x-y|/(2a)>1$. Since $M\geq0$, the oscillation bound gives
\[
|u(x)-u(y)|\leq\osc_{B_1}u\leq M\leq\frac{ML}{2a}|x-y|.
\]
Therefore
\[
  \Lip_{B_{(1-\rho)/2}}u\le C_0,
  \qquad
  C_0=\max\left\{
    \frac{L+a\beta/(2L)}{\cos\Theta},\frac{ML}{2a}
  \right\}.
\] 
Here $C_0$ depends only on $\Theta$ and $M$, and  $\rho$ is a positive constant depending on $a$ and $\beta$ such that $\frac{a}{\rho^2}<\beta<1.$  
  
\end{proof}

\begin{remark}
For $n\geq4$ and each fixed subcritical phase
$|\Theta|<(n-2)\pi/2$,
it is natural to define the scale-invariant critical height by
\[
M_{\mathrm{crit}}(n,|\Theta|)
:=
\sup\Bigl\{
m>0:\ 
\text{for every }0<M<m,\ 
\osc_{B_1}u\leq M
\Longrightarrow
\Lip_{B_{\rho }}u\leq C
\Bigr\}.
\]
In fact, one can use the one-point maximum principle to see that in the range
$(n-3)\pi/2<|\Theta|<(n-2)\pi/2$, $\osc_{B_R}u\leq MR^2$ implies
$\Lip_{B_{\rho R}}u\leq CR$ whenever
$0<M<\frac12\cot\bigl((n-2)\pi/2-|\Theta|\bigr)$, where $C>0$ and
$0<\rho<1$ depend only on $n$, $|\Theta|$, and $M$.
Our two-point method suggests an
analogous uniform interior Lipschitz estimate under a suitable
small-height assumption.

At $\Theta=0$, the higher-dimensional calculation suggests that
the estimate should hold whenever
\[
\osc_{B_R}u\leq MR^2,
\qquad
M<\tan\frac{\pi}{2(n-1)}.
\]
In particular, this gives the expected sufficient bounds
$M<1/\sqrt3$ in dimension four and $M<\sqrt2-1$ in dimension five.

We expect these bounds to yield uniform interior Lipschitz estimates
for continuous viscosity solutions, with the constants and the
interior radius depending only on $n$, $|\Theta|$, and $M$.
Their sharpness is unknown.
A natural problem is to establish this higher-dimensional extension
and to determine the optimal height threshold
$M_{\mathrm{crit}}(n,\Theta)$ for each subcritical phase.
\end{remark}

\begin{remark}
The argument also suggests investigating small-height
Lipschitz estimates for other nonconcave equations of the form
\[
F(D^2u)=\sum_{i=1}^{n}a_i\sigma_i(D^2u)=f(x).
\]
What additional conditions on the coefficients, the elliptic
branch, and the right-hand side ensure that a sufficiently
small oscillation yields an interior Lipschitz estimate?
It would be interesting to identify analogues of the
algebraic identities and positivity properties in
Lemma~\ref{lem:phase-identity} that permit a two-point
comparison for these equations.
\end{remark}

\medskip 
\noindent{\it \bf Acknowledgments.}

\
 Wei Wei would like to thank  Prof. Guohuan Qiu for stimulating discussions on this topic. 

\

\noindent{\it \bf On the use of AI.}

We first investigated the optimality of the threshold $\frac{1}{2}\tan\Theta$ obtained from the one-point maximum principle by seeking counterexamples. ChatGPT 5.6 Pro was used as a computational aid in this exploration. The examples obtained in this way reached only the threshold $M=\sec\Theta$. Motivated by this observation, the authors then explored whether the height restriction could be improved to $M<\sec\Theta$ by modifying the auxiliary function \eqref{auxiliary function1}. The resulting calculations, again assisted by ChatGPT 5.6 Pro, showed that the favorable cutoff term was insufficient to control the unfavorable terms.

The two-point auxiliary function in Remark~\ref{remark}, as well as the decision to pursue a two-point maximum-principle argument , was proposed by the authors. ChatGPT 5.6 Pro was then used to carry out and organize the corresponding calculations and algebraic estimates. More generally, ChatGPT 5.6 Pro was used only as an aid for exploratory computations, algebraic manipulations, and the organization of intermediate estimates.

The authors independently verified all arguments and take full responsibility for the completeness and correctness of the proofs.

\bibliography{special_lagrangian_references.bib}
\bibliographystyle{amsalpha}

\end{document}